\documentclass[11pt,twoside]{amsart}
\usepackage{tabularx}
\usepackage{geometry}
\usepackage{amssymb,amsmath,amsthm,latexsym}
\usepackage{mathrsfs}
\usepackage{url}
\usepackage{amsfonts}
\usepackage{graphicx}
\usepackage{hyperref}

\usepackage{graphicx}
\baselineskip=16pt
\usepackage{amssymb,amsthm,amsmath}
\usepackage{mathtools}

\newtheorem{theorem}{Theorem}[section]
\newtheorem{corollary}{Corollary}[section]
\newtheorem{lemma}{Lemma}[section]

\newtheorem{remark}{Remark}[section]
\usepackage{enumerate}

\usepackage{hyperref}
\usepackage{xcolor}

\hypersetup{colorlinks=true,linkcolor=blue,urlcolor=red}
\topmargin= .5cm
\textheight= 20cm
\textwidth= 32cc
\baselineskip=16pt
\evensidemargin= .9cm
\oddsidemargin= .9cm

\vspace{5cm}
  
\begin{document}

\title{A note on solitary numbers} 
\author[Sagar Mandal]{Sagar Mandal}
\address{Department of Mathematics and Statistics, Indian Institute of Technology Kanpur\\ Kalyanpur, Kanpur, Uttar Pradesh 208016, India}
\email{sagarmandal31415@gmail.com}

\maketitle
\let\thefootnote\relax
\footnotetext{\it Keywords and phrases: Abundancy index, Sum of divisors, Friendly numbers, Solitary numbers.}
\let\thefootnote\relax
\footnotetext{\it MSC2020:  11A25} 
\maketitle

\begin{abstract}

Does $14$ have a friend? Until now, this has been an open question. In this note, we prove that a potential friend $F$ of $14$ is an odd, non-square positive integer. $7$ appears in the prime factorization of $F$ with an even exponent while at most two prime divisors of $F$ can have odd exponents in the prime factorization of $F$. If $p\mid F$ such that $p$ is congruent to $7$ modulo $8$, then $p^{2a}\mid\mid F$, for some positive integer $a$. Further, no prime divisor of $F$ has an exponent  congruent to $7$ modulo $8$ and no prime divisor can exceed $1.4\sqrt{F}$. The primes $3,5$ cannot appear simultaneously  in the prime factorization of $F$. If $(3,F)>1$ or $(5,F)>1$, then $\omega(F)\geq4$, otherwise $\omega(F)\geq8$.

\end{abstract}

\section{Introduction}
In number theory, the sum of divisors function $\sigma(n)$ plays a central role in studying the properties of integers. For a positive integer $n$, the abundancy index is defined as
$I(n) = \frac{\sigma(n)}{n}.$ More generally,
abundancy index can be considered as a measure of perfection of an integer, the abundancy index can be used to classify numbers as perfect, abundant, or deficient. A number is perfect if $I(n)=2$, abundant if $I(n)>2$, and deficient if $I(n)<2$. Two distinct positive integers $m$ and $n$ are called friends if they share the same abundancy index, that is, $I(m) = I(n).$
For example, all perfect numbers (OEIS A000396) are friends of each other, since they all have abundancy index $2$. If a number has no friend, it is called solitary. It is easy to prove \cite{1} that if a positive integer $n$ is co-prime to $\sigma(n)$, then $n$ is a solitary number (for example, see OEIS A014567). Anderson and Hickerson \cite{1} stated that the density of such solitary numbers is zero.  Although the concept of friendly numbers is simple, many interesting and difficult problems remain unsolved. It is not known whether $10$ has a friend, though many necessary conditions have been proposed \cite{4,5,7,8}. Moreover, among numbers less than $100$, those known to have friends are
$$6, \;12, \;24,\; 28,\; 30,\; 40,\; 42,\; 56,\; 60,\; 66,\; 78,\; 80,\; 84,\; 96.$$
For further results on the subject, see \cite{1,2,3,9}.

Recent works have focused on specific integers and their possible friends. For example, Ward \cite{8} investigated whether $10$ has a friend, while Terry \cite{6} studied the case of $15$. The status of several positive integers, including $14$, $15$, and $20$, is still unresolved. If any of the suspected solitary numbers up to $372$ is actually a friendly number, then its smallest friend must be strictly greater than $10^{30}$ (see OEIS A074902). In this paper, we investigate friends of $14$ and we give certain properties of a potential friend of $14$.

\section{Properties of the abundancy index}
Some  elementary properties of the abundancy index are given below, and the proofs of the following lemmas may be found in \cite{3,9}. 

\begin{lemma}\label{Lemma 2.1}
$I(n)$ is weakly multiplicative, that is, for any two co-prime positive integers $n$ and $m$ we have $I(nm)=I(n)I(m)$.
  \end{lemma}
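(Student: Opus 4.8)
The plan is to deduce this from the classical multiplicativity of the divisor-sum function $\sigma$, so the real content is a counting argument on divisors. First I would record the structural fact that, when $\gcd(n,m)=1$, every divisor $d$ of $nm$ admits a unique factorization $d=d_{1}d_{2}$ with $d_{1}\mid n$ and $d_{2}\mid m$: one takes $d_{1}=\gcd(d,n)$ and $d_{2}=\gcd(d,m)$, and conversely any product $d_{1}d_{2}$ of a divisor of $n$ with a divisor of $m$ is a divisor of $nm$. Coprimality guarantees that the maps $d\mapsto(\gcd(d,n),\gcd(d,m))$ and $(d_{1},d_{2})\mapsto d_{1}d_{2}$ are mutually inverse, giving a bijection between the set of divisors of $nm$ and the Cartesian product of the divisor sets of $n$ and of $m$.

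Next I would sum the identity function over this bijection and use distributivity:
\[
\sigma(nm)=\sum_{d\mid nm}d=\sum_{d_{1}\mid n}\ \sum_{d_{2}\mid m}d_{1}d_{2}=\Bigl(\sum_{d_{1}\mid n}d_{1}\Bigr)\Bigl(\sum_{d_{2}\mid m}d_{2}\Bigr)=\sigma(n)\sigma(m).
\]
Dividing by $nm$ then gives
\[
I(nm)=\frac{\sigma(nm)}{nm}=\frac{\sigma(n)\sigma(m)}{nm}=\frac{\sigma(n)}{n}\cdot\frac{\sigma(m)}{m}=I(n)I(m),
\]
which is precisely the asserted weak multiplicativity.

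The only step carrying any weight is the divisor bijection, and the single place one must be careful is checking that the two maps above are genuinely inverse to one another; this is exactly where the hypothesis $\gcd(n,m)=1$ is used, since without it the product map fails to be injective. Once that verification is in place the remainder is a one-line manipulation, so I do not anticipate a real obstacle: the argument is entirely elementary and self-contained, and in any case standard, being the reason $I$, like $\sigma$, is a multiplicative arithmetic function.
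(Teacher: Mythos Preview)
Your argument is correct and is the standard proof: establish the divisor bijection $d\leftrightarrow(d_{1},d_{2})$ for coprime $n,m$, deduce $\sigma(nm)=\sigma(n)\sigma(m)$, then divide by $nm$. The paper itself does not supply a proof of this lemma but simply refers the reader to \cite{3,9}; your self-contained treatment is exactly the elementary argument one finds in those sources, so there is nothing to compare and nothing to fix.
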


\begin{lemma}\label{Lemma 2.2}
If $\gamma,n$ are two positive integers and $\gamma>1$. Then $I(\gamma n)>I(n)$.
\end{lemma}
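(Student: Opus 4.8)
The plan is to eliminate the fractions and reduce the claim to a statement about $\sigma$ alone. Writing $I(\gamma n)=\sigma(\gamma n)/(\gamma n)$ and $I(n)=\sigma(n)/n$ and multiplying through by the positive quantity $\gamma n$, the desired inequality $I(\gamma n)>I(n)$ is equivalent to
\[
\sigma(\gamma n) > \gamma\,\sigma(n).
\]
Thus it suffices to prove the following slightly more general fact: for every positive integer $m$ and every integer $k>1$ one has $\sigma(km)>k\,\sigma(m)$.

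To prove this, I would first note that for each divisor $d$ of $m$ the integer $kd$ divides $km$, and that the map $d\mapsto kd$ is injective; hence $\{\,kd : d\mid m\,\}$ is a set of distinct divisors of $km$ whose sum is exactly $k\,\sigma(m)$. This already gives $\sigma(km)\ge k\,\sigma(m)$. To obtain the strict inequality, I would exhibit one divisor of $km$ not lying in this set: since $k>1$, the divisor $1$ of $km$ satisfies $1<k\le kd$ for every $d\mid m$, so $1$ is not of the form $kd$. Therefore $\sigma(km)\ge k\,\sigma(m)+1>k\,\sigma(m)$, and dividing by $\gamma n$ with $k=\gamma$, $m=n$ recovers $I(\gamma n)>I(n)$.

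I do not expect a real obstacle; the only points needing a careful line are the injectivity of $d\mapsto kd$ (so that the $kd$ are genuinely distinct divisors of $km$) and the observation that the divisor $1$ is not among them, which is what secures strictness rather than mere $\ge$. An alternative would be to invoke the weak multiplicativity of $I$ from Lemma~\ref{Lemma 2.1} together with the fact that $a\mapsto I(p^{a})=(1+p+\cdots+p^{a})/p^{a}$ is strictly increasing in $a$, reducing the claim to prime powers; but that route requires splitting into cases according to which primes $\gamma$ shares with $n$, so I would present the direct divisor-counting argument instead.
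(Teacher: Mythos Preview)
Your argument is correct: the map $d\mapsto \gamma d$ injects the divisors of $n$ into those of $\gamma n$, their images sum to $\gamma\,\sigma(n)$, and the divisor $1$ (omitted since $\gamma>1$) forces the strict inequality $\sigma(\gamma n)>\gamma\,\sigma(n)$, which is equivalent to $I(\gamma n)>I(n)$.

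There is nothing to compare against in the paper itself: Lemma~\ref{Lemma 2.2} is stated without proof, with a blanket reference to \cite{3,9} for all the lemmas in that section. Your divisor-counting argument is one of the standard elementary proofs of this fact and would be entirely appropriate here; the alternative route you sketch via Lemma~\ref{Lemma 2.1} and monotonicity of $I(p^{a})$ in $a$ is also standard but, as you note, clumsier to write out because it requires separating the primes of $\gamma$ according to whether they divide $n$.
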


\begin{lemma}\label{Lemma 2.3}
If $p_1, p_2,p_3,\dots,p_k$ are $k$ distinct prime numbers and $\gamma_1,\gamma_2,\gamma_3,\ldots,\gamma_k$ are positive integers, then
\begin{align*}
I\biggl (\prod_{i=1}^{k}p_i^{\gamma_i}\biggl)=\prod_{i=1}^{k}\biggl(\sum_{j=0}^{\gamma_i}p_i^{-j}\biggl)=\prod_{i=1}^{k}\frac{p_i^{\gamma_i+1}-1}{p_i^{\gamma_i}(p_i-1)}.
\end{align*}
\end{lemma}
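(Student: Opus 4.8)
The plan is to reduce the multi-prime claim to the single prime-power case by invoking the weak multiplicativity already recorded in Lemma \ref{Lemma 2.1}, and then to evaluate the abundancy index of a prime power by a direct geometric-series computation. Since the $p_i$ are distinct primes, the factors $p_1^{\gamma_1}, \ldots, p_k^{\gamma_k}$ are pairwise coprime, so a short induction on $k$ using Lemma \ref{Lemma 2.1} gives
$$I\left(\prod_{i=1}^{k} p_i^{\gamma_i}\right) = \prod_{i=1}^{k} I\left(p_i^{\gamma_i}\right).$$
It therefore suffices to establish the two displayed expressions for a single factor $I(p^{\gamma})$.

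For a fixed prime $p$ and positive integer $\gamma$, I would start from the definition $I(p^\gamma) = \sigma(p^\gamma)/p^\gamma$ and note that the divisors of $p^\gamma$ are exactly $1, p, p^2, \ldots, p^\gamma$, whence $\sigma(p^\gamma) = \sum_{j=0}^{\gamma} p^j$. Dividing this sum termwise by $p^\gamma$ and reindexing via $m = \gamma - j$ yields
$$I(p^\gamma) = \frac{1}{p^\gamma}\sum_{j=0}^{\gamma} p^j = \sum_{j=0}^{\gamma} p^{\,j-\gamma} = \sum_{m=0}^{\gamma} p^{-m},$$
which, once the product over $i$ is restored, is the middle expression of the lemma. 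For the closed form I would sum the finite geometric series $\sum_{j=0}^{\gamma} p^j = \frac{p^{\gamma+1}-1}{p-1}$ (valid since $p \geq 2 > 1$) and divide by $p^\gamma$ to obtain $I(p^\gamma) = \frac{p^{\gamma+1}-1}{p^\gamma(p-1)}$, matching the right-hand factor.

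Taking the product of these single-prime identities over $i = 1, \ldots, k$ and combining them with the multiplicative reduction completes the proof. There is no genuine obstacle here: the argument is routine, and the only points worth a word of care are the verification that the prime-power factors are pairwise coprime so that Lemma \ref{Lemma 2.1} applies to the full product (handled by the induction on the number of prime factors) and the geometric-series evaluation, which requires $p \neq 1$ and is thus automatic for primes.
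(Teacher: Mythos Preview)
Your argument is correct and complete: the reduction via Lemma~\ref{Lemma 2.1} to a single prime power, followed by the geometric-series evaluation of $\sigma(p^\gamma)/p^\gamma$, is the standard and entirely adequate route. Note, however, that the paper does not actually supply its own proof of this lemma; it merely states that the proofs of Lemmas~\ref{Lemma 2.1}--\ref{Lemma 2.5} may be found in the cited references \cite{3,9}. So your write-up in fact goes beyond what the paper provides, and there is nothing to compare against beyond confirming that your approach is the expected elementary one.
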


\begin{lemma}\label{Lemma 2.4}
If $p_{1},\dots,p_{k}$ are distinct prime numbers, $q_{1},\ldots,q_{k}$ are distinct prime numbers such that  $p_{i}\leq q_{i}$ for all $1\leq i\leq k$. If $\gamma_1,\gamma_2,\dots,\gamma_k$ are positive integers, then
\begin{align*}
I \biggl(\prod_{i=1}^{k}p_i^{\gamma_i}\biggl)\geq I\biggl(\prod_{i=1}^{k}q_i^{\gamma_i}\biggl).
\end{align*}

\end{lemma}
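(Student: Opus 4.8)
The plan is to reduce the comparison to $k$ independent one-prime inequalities by exploiting the factorization in Lemma \ref{Lemma 2.3}. First I would introduce, for each index $i$, the local factor $g_i(x)=\sum_{j=0}^{\gamma_i}x^{-j}$, so that Lemma \ref{Lemma 2.3} gives $I(\prod_{i=1}^{k}p_i^{\gamma_i})=\prod_{i=1}^{k}g_i(p_i)$ and likewise $I(\prod_{i=1}^{k}q_i^{\gamma_i})=\prod_{i=1}^{k}g_i(q_i)$. The entire statement then follows once I establish the local bound $g_i(p_i)\geq g_i(q_i)$ for each $i$ and multiply these across $i=1,\dots,k$.

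For the local inequality, the key point is that $x\mapsto x^{-j}$ is non-increasing on the positive reals for every fixed $j\geq 0$. Since $p_i\leq q_i$, this yields $p_i^{-j}\geq q_i^{-j}$ for each $j$, and summing over $j=0,\dots,\gamma_i$ gives $g_i(p_i)\geq g_i(q_i)$; equivalently, $g_i$ is a finite sum of non-increasing functions and is therefore itself non-increasing. Because each factor satisfies $g_i(\,\cdot\,)\geq 1>0$, I may multiply the $k$ inequalities $g_i(p_i)\geq g_i(q_i)$ term by term to obtain $\prod_{i=1}^{k}g_i(p_i)\geq\prod_{i=1}^{k}g_i(q_i)$, which by Lemma \ref{Lemma 2.3} is precisely the claimed comparison of abundancy indices.

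There is no substantial obstacle here: the argument is a routine monotonicity-and-positivity computation. The only point worth flagging is that multiplying the local inequalities is legitimate solely because every factor is positive, which holds since each $g_i(p_i)\geq 1$. I would also remark that the hypotheses that the $p_i$ are pairwise distinct and that the $q_i$ are pairwise distinct are used only to invoke the product formula of Lemma \ref{Lemma 2.3}; they play no role in the monotonicity step, which uses nothing beyond the componentwise bound $p_i\leq q_i$. As an alternative phrasing one could argue inductively, swapping one prime $p_i$ for $q_i$ at a time and applying the single-factor monotonicity at each step, but the direct factorization above is cleaner.
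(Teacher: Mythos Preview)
Your argument is correct: factoring via Lemma \ref{Lemma 2.3}, observing that each local factor $g_i(x)=\sum_{j=0}^{\gamma_i}x^{-j}$ is non-increasing on the positive reals, and multiplying the resulting termwise inequalities (all factors being at least $1$) yields the claim. Note, however, that the paper does not supply its own proof of Lemma \ref{Lemma 2.4}; it simply states the lemma and refers the reader to \cite{3,9} for proofs of all the properties in Section~2. Your proof is the standard elementary derivation one would expect to find in those references, so there is nothing to compare against within the paper itself. One minor caution: your closing remark about the alternative ``swap one prime at a time'' approach would need care, since after a single swap the intermediate tuple of primes need not consist of distinct primes (e.g.\ $p_1=3$, $p_2=5$, $q_1=5$, $q_2=7$), so Lemma \ref{Lemma 2.3} may not apply midway; your direct factorization argument avoids this issue entirely.
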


\begin{lemma}\label{Lemma 2.5}
If $n=\displaystyle{\prod_{i=1}^{k}p_i^{\gamma_i}}$, then $\displaystyle{I(n)<\prod_{i=1}^{k}\frac{p_i}{p_i-1}}$.
\end{lemma}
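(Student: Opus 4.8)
The plan is to reduce the claim to a collection of one-variable estimates on geometric series, using the closed form for $I(n)$ already established. First I would invoke Lemma~\ref{Lemma 2.3} to rewrite the abundancy index as a product of finite geometric sums, namely $I(n)=\prod_{i=1}^{k}\bigl(\sum_{j=0}^{\gamma_i}p_i^{-j}\bigr)$. This converts a statement about the multiplicative function $I$ into a factor-by-factor comparison, which is the natural way to exploit the product structure appearing on the right-hand side of the inequality.

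Next, for each fixed index $i$ I would compare the finite sum with its infinite completion. Since $p_i\ge 2$, the ratio $p_i^{-1}$ lies strictly between $0$ and $1$, so the geometric series $\sum_{j=0}^{\infty}p_i^{-j}$ converges to $\tfrac{1}{1-p_i^{-1}}=\tfrac{p_i}{p_i-1}$. Because $\gamma_i$ is finite and every omitted term $p_i^{-j}$ with $j>\gamma_i$ is strictly positive, the partial sum is strictly smaller than the full series:
\[
\sum_{j=0}^{\gamma_i}p_i^{-j} \;<\; \sum_{j=0}^{\infty}p_i^{-j} \;=\; \frac{p_i}{p_i-1}.
\]
This is the one genuine inequality in the argument, and its strictness — coming from dropping the positive tail of a convergent series — is exactly what delivers the strict inequality claimed in the lemma.

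Finally I would multiply these $k$ estimates together. Since each finite sum is positive and each bound $\tfrac{p_i}{p_i-1}$ is positive, the product of the strict inequalities remains strict, giving $I(n)=\prod_{i=1}^{k}\bigl(\sum_{j=0}^{\gamma_i}p_i^{-j}\bigr)<\prod_{i=1}^{k}\tfrac{p_i}{p_i-1}$, as desired. There is no substantial obstacle here; the only point requiring a moment's care is that multiplying strict inequalities term by term is legitimate precisely because all quantities involved are positive, so no sign reversal or degeneration to equality can occur.
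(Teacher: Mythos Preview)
Your argument is correct and is the standard proof: pass from the finite geometric sum in Lemma~\ref{Lemma 2.3} to the infinite one, obtaining a strict inequality factor by factor, and multiply. The paper itself does not prove Lemma~\ref{Lemma 2.5} but refers the reader to \cite{3,9}, where exactly this comparison of the truncated geometric series with its limit $\tfrac{p_i}{p_i-1}$ is used; so your route coincides with the intended one.
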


Throughout this article, we use $p,p_1,\dots,p_{\omega(F)},p_k$ for denoting prime numbers. Further, we assume that the numbers $a_1,a_2,\dots,a_{\omega(F)},a_k$ are positive integers.

\section {Main results}

Note that, $I(14)=\frac{12}{7}$, therefore, a positive integer $F$ is said to be a friend of $14$ if $I(F)=I(14)=\frac{12}{7}$. The following results describe certain characteristics of a friend of $14$.
\begin{theorem}\label{thm 3.1}
Let $F$ be a friend of $14$, then $F$ is an odd positive non-square integer.   
\end{theorem}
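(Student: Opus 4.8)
The plan is to exploit the relation $I(F)=I(14)=\frac{12}{7}$, rewritten as the Diophantine identity $7\,\sigma(F)=12\,F$, and to extract divisibility and parity information from it.

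First, since $\gcd(7,12)=1$, the equation $7\,\sigma(F)=12\,F$ forces $7\mid F$. To prove that $F$ is odd, I argue by contradiction: if $2\mid F$, then together with $7\mid F$ we obtain $14\mid F$, so $F=14t$ for some positive integer $t$. Because a friend of $14$ is, by definition, an integer distinct from $14$, we must have $t\geq 2$; applying Lemma~\ref{Lemma 2.2} with $\gamma=t>1$ and $n=14$ yields $I(F)=I(14t)>I(14)$, contradicting $I(F)=I(14)$. Hence $F$ is odd. The only subtle point is the step $t\neq 1$, which is precisely where the distinctness clause in the definition of ``friend'' is used; the underlying phenomenon is that $I(14)=I(2)\,I(7)$ already attains the target value $\frac{12}{7}$, so no strictly larger multiple of $14$ can.

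For the non-square assertion, suppose $F=\prod_{i} p_i^{\gamma_i}$ is a perfect square, so that every $\gamma_i$ is even. By Lemma~\ref{Lemma 2.3}, $\sigma(F)=\prod_{i}\bigl(1+p_i+\cdots+p_i^{\gamma_i}\bigr)$, and each factor is odd: if $p_i=2$ it equals $2^{\gamma_i+1}-1$, while if $p_i$ is odd it is a sum of $\gamma_i+1$ odd terms and $\gamma_i+1$ is odd. Thus $\sigma(F)$ is odd, hence $7\,\sigma(F)$ is odd, whereas $12\,F$ is even; this contradicts $7\,\sigma(F)=12\,F$. Therefore $F$ is not a perfect square. (Note that this part does not even require knowing that $F$ is odd.)

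I do not expect a genuine obstacle here: each claim reduces to elementary divisibility and parity bookkeeping, supplemented only by the monotonicity statement in Lemma~\ref{Lemma 2.2}. If anything requires care, it is making explicit that ``$F$ is a friend of $14$'' entails $F\neq 14$, which is exactly what licenses the strict inequality in the first part.
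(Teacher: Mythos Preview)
Your proof is correct and follows essentially the same route as the paper: derive $7\mid F$ from $7\sigma(F)=12F$, use Lemma~\ref{Lemma 2.2} on a multiple of $14$ to rule out even $F$, and obtain a parity contradiction from $\sigma(F)$ being odd when all exponents are even. Your presentation is in fact slightly more streamlined, since you do not separately exclude the case $F=7^a$ before the non-square argument, and you make explicit that the distinctness clause $F\neq 14$ is what forces $t\ge 2$.
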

\begin{proof}
    Let a positive integer $F$ be a friend of $14$. Then $F$ must be greater than $14$ as for any positive integer less than $14$ we have $I(F)\neq \frac{12}{7}$. Since $\frac{\sigma(F)}{F}=I(F)=\frac{12}{7}$ we have $7\sigma(F)=12F$, from which it follows that $7\mid F$ and $12\mid \sigma(F)$ as $(7,12)=1$. Therefore, we can write $F=7F'$ for some positive integer $F'>2$. 

Let us  assume that $F'$ is an even positive integer. Then we can rewrite $F$ as $F=14F''$ for some positive integer $F''>1$, but then $I(F)>I(14)$ by Lemma \ref{Lemma 2.2}. Therefore, $F'$ is an odd positive integer and thus $F$ is an odd positive integer.

    Now if $F=7^a$ for some positive integer $a>2$, then it cannot be a friend of $14$ as by Lemma~\ref{Lemma 2.5} we have $I(7^{a})<\frac{7}{6}<\frac{12}{7}$. Therefore, $F$ must be written as $\displaystyle{F=7^{a_{1}}\cdot \prod_{i=2}^{\omega(F)}p_{i}^{a_{i}}}$($p_1=7$). 

Let us suppose that all $a_i$ are even. Then
$$I(F)=\frac{\sigma(F)}{F}=\frac{12}{7},$$
using Lemma \ref{Lemma 2.1} we get
\begin{align*}
   I(7^{a_1})\cdot \prod_{i=2}^{\omega(F)}I(p_{i}^{a_i})=\frac{12}{7}.
   \end{align*}
   This implies
   \begin{align*}
 \sigma(7^{a_1})\cdot \prod_{i=2}^{\omega(F)}\sigma(p_i^{a_i})=12\cdot7^{{a_1}-1}\cdot\prod_{i=2}^{\omega(F)}p_{i}^{a_i},
\end{align*}
that is
\begin{align*}
 (1+7+ \cdots +7^{a_1})\cdot\prod_{i=2}^{\omega(F)}(1+p_i+\dots+p_i^{a_i})=12\cdot7^{{a_1}-1}\cdot\prod_{i=2}^{\omega(F)}p_{i}^{a_i}.
\end{align*}
Since $p_i>2$, for all $1\leq i\leq \omega(F)$ the right-hand side of the above expression is an even integer but the left-hand side is odd since $a_i$ are even, which immediately implies that $(1+p_i+\dots+p_i^{a_i})$ is odd, which is absurd. Hence, all $a_i$ cannot be even integers. Therefore, $F$ is a non-square positive integer. This proves that $F$ is an odd positive non-square integer.
\end{proof}

\begin{remark}\label{rem3.1}
If $F$ is a friend of $14$, then from the proof of Theorem \ref{thm 3.1}, we can note that $4\mid\mid\sigma(F)$, as $F$ is an odd positive integer.
\end{remark}

\medskip

Remark \ref{rem3.1} is very crucial as we will be using it enormously in the upcoming proofs.

\medskip

\begin{theorem}\label{thm 3.2}
Let $F$ be a friend of $14$. If $p\mid F$ such that $p\equiv 7 \pmod 8$, then $p$ appears in the prime factorization of $F$ to an even exponent.    
\end{theorem}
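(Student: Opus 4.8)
The plan is to lean entirely on Remark \ref{rem3.1}, which records that $4\mid\mid\sigma(F)$; equivalently, the $2$-adic valuation satisfies $v_2(\sigma(F))=2$. Since $\sigma$ is multiplicative and $F=7^{a_1}\prod_{i=2}^{\omega(F)}p_i^{a_i}$, we have $\sigma(F)=\prod_i\sigma(p_i^{a_i})$, so for every prime power $p^a\mid\mid F$ the factor $\sigma(p^a)$ divides $\sigma(F)$, and hence $v_2(\sigma(p^a))\le 2$.

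First I would argue by contradiction: suppose $p\mid F$ with $p\equiv 7\pmod 8$, and suppose $p$ occurs in $F$ with an \emph{odd} exponent $a$. Because $a$ is odd, $\sigma(p^a)=1+p+\cdots+p^a$ has an even number of terms, and pairing consecutive ones gives the factorization
\[
\sigma(p^a)=(1+p)\bigl(1+p^2+p^4+\cdots+p^{a-1}\bigr).
\]
Next I would note that $p\equiv 7\equiv -1\pmod 8$ forces $8\mid p+1$, so $v_2(1+p)\ge 3$, and the second factor is an integer, contributing a nonnegative amount to the $2$-adic valuation. Therefore $v_2(\sigma(p^a))\ge v_2(1+p)\ge 3$, i.e. $8\mid\sigma(p^a)$.

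Finally, since $\sigma(p^a)\mid\sigma(F)$, this yields $8\mid\sigma(F)$, contradicting $4\mid\mid\sigma(F)$ from Remark \ref{rem3.1}. Hence the exponent of $p$ in $F$ must be even, which is the assertion. I do not foresee a genuine obstacle here: the only points needing (routine) care are the elementary pairing identity for $\sigma(p^a)$ with $a$ odd and the remark that the cofactor $1+p^2+\cdots+p^{a-1}$ cannot decrease the $2$-adic valuation; all the real leverage is already supplied by Remark \ref{rem3.1}.
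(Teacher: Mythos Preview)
Your proof is correct and follows essentially the same approach as the paper: assume the exponent is odd, show $8\mid\sigma(p^a)$, and contradict $4\mid\mid\sigma(F)$ from Remark~\ref{rem3.1}. The only cosmetic difference is that the paper computes $1+p+\cdots+p^{2a+1}\equiv 1-1+\cdots-1\equiv 0\pmod 8$ directly, whereas you use the factorization $\sigma(p^a)=(1+p)(1+p^2+\cdots+p^{a-1})$ and $8\mid 1+p$; both arguments are equivalent.
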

\begin{proof}
Suppose, for contradiction, that $p$ congruent to $7$ modulo $8$ is a prime divisor of $F$ with an odd exponent, that is, $p^{2a+1}\mid\mid F$, for some non-negative integer $a$. Then
$$\sigma(p^{2a+1})=1+p+p^2+\dots+p^{2a+1}\equiv 1-1+1 - \dots-1 \ (\bmod \ 8) =0.$$
This implies that $8\mid \sigma(p^{2a+1})$ and so $8\mid \sigma(F)$, but this is a contradiction by Remark~\ref{rem3.1}. Therefore, the exponent of $p$ in the prime factorization of $F$ must be an even positive integer.
\end{proof}

\smallskip

An immediate consequence of the preceding theorem, the following corollary describes that the exponent of the prime divisor $7$ of $F$ cannot be odd.

\smallskip

\begin{corollary}
If $F$ is a friend of $14$, then $7^{2a}\mid\mid F$, for some positive integer $a$.  
\end{corollary}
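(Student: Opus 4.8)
The plan is to observe that this corollary is just the special case $p=7$ of Theorem~\ref{thm 3.2}, together with the fact that $7$ actually divides $F$. First I would recall from Theorem~\ref{thm 3.1} (and its proof) that any friend $F$ of $14$ satisfies $7\mid F$, and indeed the proof already writes $F=7^{a_{1}}\cdot\prod_{i=2}^{\omega(F)}p_i^{a_i}$ with $p_1=7$ and $a_1$ a positive integer. So $7^{a_1}\mid\mid F$ for some $a_1\geq 1$.

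Next, since $7\equiv 7\pmod 8$, Theorem~\ref{thm 3.2} applies verbatim with the prime $p=7$ and forces the exponent $a_1$ to be even. Combining the two facts, $a_1$ is a positive even integer, so $a_1=2a$ for some positive integer $a$, which is precisely the assertion $7^{2a}\mid\mid F$.

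There is essentially no obstacle here; the only point requiring a little care is that one must invoke Theorem~\ref{thm 3.1} (not merely Theorem~\ref{thm 3.2}) to guarantee the exponent is strictly positive, so that the resulting $a$ is a \emph{positive} integer rather than a non-negative one. If a self-contained argument avoiding the citation chain were preferred, one could simply rerun the computation of Theorem~\ref{thm 3.2} in this case: assuming $7^{2a+1}\mid\mid F$ gives $\sigma(7^{2a+1})=1+7+\cdots+7^{2a+1}\equiv 1-1+\cdots+1-1\equiv 0\pmod 8$, so $8\mid\sigma(7^{2a+1})\mid\sigma(F)$, contradicting Remark~\ref{rem3.1} that $4\mid\mid\sigma(F)$.
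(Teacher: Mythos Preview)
Your proposal is correct and follows essentially the same approach as the paper: note that $7\mid F$, observe $7\equiv 7\pmod 8$, and apply Theorem~\ref{thm 3.2}. The paper's proof is more terse, omitting the explicit citation of Theorem~\ref{thm 3.1} for the positivity of the exponent and not including the optional self-contained rerun you appended, but the logic is identical.
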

\begin{proof}
Since $F$ is a friend of $14$, $7$ is a prime divisor of $F$. As $7$ is a prime that satisfies \linebreak $7\equiv7~(\bmod~8)$, it follows from Theorem \ref{thm 3.2} that  $7^{2a}\mid\mid F$, for some positive integer $a$.
\end{proof}

\begin{theorem}
If $F$ is a friend of $14$ , then no prime divisor of $F$ has an exponent congruent to $7$ modulo $8$.
\end{theorem}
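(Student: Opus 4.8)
The plan is to argue by contradiction, in parallel with the proof of Theorem~\ref{thm 3.2}, using Remark~\ref{rem3.1}. Suppose some prime $p$ divides $F$ with $p^{b}\mid\mid F$ and $b\equiv 7\pmod 8$. By Theorem~\ref{thm 3.1}, $F$ is odd, so $p$ is an odd prime; hence $p^{2}\equiv 1\pmod 8$, and every even power of $p$ is congruent to $1$ modulo $8$ while every odd power is congruent to $p$ modulo $8$.

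Next I would compute $\sigma(p^{b})=1+p+\cdots+p^{b}$ modulo $8$. Since $b$ is odd, the $b+1$ summands fall into consecutive pairs $p^{2j}+p^{2j+1}=p^{2j}(1+p)\equiv 1+p\pmod 8$. Writing $b=2m+1$, there are $m+1$ such pairs, so $\sigma(p^{b})\equiv (m+1)(1+p)\pmod 8$. From $b\equiv 7\pmod 8$ we obtain $2m\equiv 6\pmod 8$, that is $m\equiv 3\pmod 4$, so $4\mid (m+1)$; and $1+p$ is even because $p$ is odd. Therefore $8\mid (m+1)(1+p)$, and so $8\mid\sigma(p^{b})$.

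To conclude, since $\sigma$ is multiplicative and $p^{b}\mid\mid F$, the factor $\sigma(p^{b})$ divides $\sigma(F)$; hence $8\mid\sigma(F)$, contradicting Remark~\ref{rem3.1}, which asserts $4\mid\mid\sigma(F)$. Thus no prime divisor of $F$ has an exponent congruent to $7$ modulo $8$.

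I do not expect a genuine obstacle: the whole argument is elementary modular arithmetic. The one point that must be checked carefully is that $b\equiv 7\pmod 8$ forces $m+1$ to be divisible by $4$, not merely by $2$ --- it is precisely this extra factor of $2$, combined with the parity of $1+p$, that makes $\sigma(p^{b})$ divisible by $8$ and triggers the contradiction with Remark~\ref{rem3.1}. Note also that the argument uses nothing about $p$ beyond its being odd, so it applies uniformly to every odd prime divisor of $F$.
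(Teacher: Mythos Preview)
Your proof is correct and follows the same strategy as the paper: show that $8\mid\sigma(p^{b})$ whenever $b\equiv 7\pmod 8$, and contradict Remark~\ref{rem3.1}. The only difference is tactical---where the paper splits into the four residue classes $p\equiv\pm1,\pm3\pmod 8$ and computes each case separately, your pairing argument via $p^{2}\equiv 1\pmod 8$ handles all odd primes uniformly and is slightly cleaner.
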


\begin{proof}
Let us assume that $p$ is a prime divisor of $F$ with an exponent congruent to $7$ modulo $8$, that is, $p^a\mid\mid F$ where $a\equiv7\ (\bmod \ 8)$. Then
\begin{align*}
    \sigma(p^a)=1+p+\dots+p^{a}\equiv \left\{
\begin{array}{ll}
      1+(\pm1)+\dots+(\pm1)^a \ (\bmod \ 8) \\[1mm]
     1+(\pm 3)+\dots+(\pm3)^a \ (\bmod \ 8)
\end{array}
\right.
\end{align*}
since $a\equiv 7 \pmod 8$ we have
$$1+(\pm1)+\dots+(\pm1)^a =\left\{\begin{array}{ll}
      a+1 \equiv 0 \ (\bmod \ 8)\\[1mm]
    0\equiv 0 \ (\bmod \ 8)
\end{array}\right. $$
and
$$1+(\pm 3)+\dots+(\pm3)^a =\left\{\begin{array}{ll}
      \displaystyle{\frac{3^{a+1}-1}{2} \equiv 0 \ (\bmod \ 8)} \\[3mm]
    \displaystyle{\frac{1-3^{a+1}}{4}\equiv 0 \ (\bmod \ 8)} 
\end{array}\right. . $$
This shows that $8\mid  \sigma(p^a)$, which implies $8\mid\sigma(F)$ but $4\mid\mid\sigma(F)$ from Remark \ref{rem3.1}. Hence, no prime divisor of $F$ has an exponent congruent to $7$ modulo $8$.
\end{proof}

\smallskip

We may ask how many distinct prime divisors of $F$ can have odd exponents in the prime factorization of $F$? The following theorem answers the question.

\smallskip

\begin{theorem}
If $F$ is a friend of $14$, then at most two distinct prime divisors of $F$ have odd exponents in the prime factorization of $F$.
\end{theorem}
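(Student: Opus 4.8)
The plan is to argue by contradiction via a $2$-adic analysis of $\sigma(F)$, exactly in the spirit of the previous three proofs, using Remark~\ref{rem3.1} that $4\mid\mid\sigma(F)$. Suppose at least three distinct primes $p,q,r\mid F$ occur with odd exponents. Since $F$ is odd, each such prime is odd, and for an odd prime $\ell$ with $\ell^{b}\mid\mid F$ and $b$ odd we may pair the terms of $\sigma(\ell^{b})=1+\ell+\cdots+\ell^{b}$ into $(b+1)/2$ blocks of the form $\ell^{2j}(1+\ell)$, so that $(1+\ell)\mid\sigma(\ell^{b})$; in particular $2\mid\sigma(\ell^{b})$. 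Hence each of the three factors $\sigma(p^{a_p}),\sigma(q^{a_q}),\sigma(r^{a_r})$ is even, so $8\mid\sigma(F)$, contradicting $4\mid\mid\sigma(F)$. This already gives the bound.

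The one point that needs care is making sure the three even factors are genuinely \emph{distinct} factors of $\sigma(F)$ and that no cancellation of $2$'s can occur — but of course $2$-adic valuations only add under multiplication, so $v_2(\sigma(F))=\sum_{i}v_2(\sigma(p_i^{a_i}))\geq v_2(\sigma(p^{a_p}))+v_2(\sigma(q^{a_q}))+v_2(\sigma(r^{a_r}))\geq 1+1+1=3$, using Lemma~\ref{Lemma 2.3} together with multiplicativity of $\sigma$. Thus $8\mid\sigma(F)$, the desired contradiction. I would present the argument in precisely this $v_2$ form to keep it clean.

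It is worth noting that a sharper accounting recovers the earlier corollary as a consistency check: the prime $7$ has even exponent, so it contributes nothing to the count; and if $p\equiv 7\pmod 8$ then $p^{a_p}\mid\mid F$ forces $a_p$ even by Theorem~\ref{thm 3.2}, so such primes also contribute nothing. Hence the primes with odd exponents all satisfy $p\equiv 1,3,5\pmod 8$, and at most two of them exist; one could even split further by residue, but that is not needed here. The main (and only) obstacle is simply to state the $1+\ell\mid\sigma(\ell^{b})$ observation carefully for odd $b$; everything else is the same even-vs-odd parity contradiction used throughout this section.
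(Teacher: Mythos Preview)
Your proof is correct and follows essentially the same route as the paper: assume three primes occur to odd exponents, observe that each corresponding $\sigma(p_i^{a_i})$ is even, and conclude $8\mid\sigma(F)$, contradicting Remark~\ref{rem3.1}. The only cosmetic difference is that the paper establishes evenness by reducing $1+p_i+\cdots+p_i^{a_i}$ modulo $2$ (an even number of odd summands), whereas you invoke the factorization $(1+\ell)\mid\sigma(\ell^{b})$ for odd $b$; both are immediate, and your additional $v_2$ bookkeeping and closing remarks, while fine, are not needed for the argument.
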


\begin{proof}
Suppose, for contradiction, that $F$ has three distinct prime divisors $p_1,p_2,p_3$ with odd exponents $a_1,a_2,a_3$, respectively, in the prime factorization of $F$. Then
$$\sigma(p_i^{a_i})=1+p_i+\dots +p_i^{a_i}\equiv1+1+\dots+1 \ (\bmod \ 2) =0.$$
This implies that $2\mid \sigma(p_i^{a_i})$, for $i=1,2,3$. Thus, we get $8\mid \sigma(p_1^{a_1})\sigma(p_2^{a_2})\sigma(p_3^{a_3})$, that is, $8\mid \sigma(F)$ but this contradicts Remark \ref{rem3.1}. Therefore, we conclude that at most two distinct prime divisors of $F$ can have odd exponents in the prime factorization of $F$.
\end{proof}

\begin{theorem}
If $3$ is a divisor of a friend $F$ of $14$, then $3\mid\mid F$.
\end{theorem}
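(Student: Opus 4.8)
The plan is to argue by contradiction: assume $3^{a}\mid\mid F$ with $a\geq 2$ and derive a contradiction with $I(F)=\frac{12}{7}$. The tools are weak multiplicativity (Lemma~\ref{Lemma 2.1}), the strict monotonicity of $I$ in each exponent (Lemma~\ref{Lemma 2.2}), and the corollary proved above, that $7^{2b}\mid\mid F$ for some positive integer $b$. From Lemma~\ref{Lemma 2.2} one gets the lower bounds $I(7^{2b})\geq I(7^{2})=\frac{57}{49}$ and $I(p^{c})>1$ for every prime power $p^{c}\mid\mid F$, so by Lemma~\ref{Lemma 2.1} the quantity $I(F)$ is at least the product of the $I$-values of any chosen subset of the prime-power components of $F$ (the omitted components only increase the product). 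I will also use the elementary observation that, since $7\sigma(F)=12F$, any prime dividing $\sigma(F)$ that is coprime to $12$ must already divide $F$.

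First I would treat the case $a\geq 3$. Here Lemma~\ref{Lemma 2.2} gives $I(3^{a})\geq I(3^{3})=\frac{40}{27}$, so that $I(F)\geq I(3^{a})\,I(7^{2b})\geq\frac{40}{27}\cdot\frac{57}{49}=\frac{760}{441}>\frac{756}{441}=\frac{12}{7}$, a contradiction. This rules out every case except $a=2$.

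The case $a=2$ is the only delicate one, because the two-prime estimate $I(3^{2})\,I(7^{2})=\frac{741}{441}$ falls just short of $\frac{12}{7}=\frac{756}{441}$, so a third prime divisor must be produced. The key point is that $3^{2}\mid\mid F$ forces $\sigma(3^{2})=13$ to divide $\sigma(F)$, and since $13$ is coprime to $12$ the relation $7\sigma(F)=12F$ forces $13\mid F$; writing $13^{c}\mid\mid F$ with $c\geq 1$, Lemma~\ref{Lemma 2.2} gives $I(13^{c})\geq I(13)=\frac{14}{13}$. As $3,7,13$ are distinct primes, weak multiplicativity yields $I(F)\geq I(3^{2})\,I(7^{2b})\,I(13^{c})\geq\frac{13}{9}\cdot\frac{57}{49}\cdot\frac{14}{13}=\frac{38}{21}>\frac{36}{21}=\frac{12}{7}$, again a contradiction. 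Hence $a=1$, that is, $3\mid\mid F$. I expect the case $a=2$ to be the crux: the naive lower bound just misses $\frac{12}{7}$, and one has to notice that the prime $13=\sigma(9)$ is itself forced into the factorization of $F$, which supplies exactly the extra factor needed to push $I(F)$ past $\frac{12}{7}$.
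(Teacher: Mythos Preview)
Your proof is correct and follows essentially the same route as the paper: rule out $a\geq 3$ via $I(3^{3}\cdot 7^{2})=\tfrac{760}{441}>\tfrac{12}{7}$, then in the case $a=2$ use $\sigma(3^{2})=13$ together with $7\sigma(F)=12F$ to force $13\mid F$, and finish with $I(3^{2}\cdot 7^{2}\cdot 13)=\tfrac{38}{21}>\tfrac{12}{7}$. The only cosmetic difference is that you phrase the lower bounds via Lemma~\ref{Lemma 2.1} as a product over selected prime-power components, whereas the paper invokes Lemma~\ref{Lemma 2.2} directly on the full integer; the arithmetic and the key idea (the forced appearance of the prime $13$) are identical.
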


\begin{proof}
Let $F=3^{a}\cdot7^{2b}\cdot m$ be a friend of $14$, where $a,b,m$ are positive integers. If $a\geq3$, then using Lemma \ref{Lemma 2.2} we get
$$I(3^{a}\cdot7^{2b}\cdot m)\geq I(3^3\cdot 7^2)=\frac{760}{441}>I(14).$$
Therefore $a\leq2$. Let us suppose that $a=2$. Then
$$\frac{\sigma(3^2\cdot7^{2b}\cdot m)}{3^2\cdot7^{2b}\cdot m}=I(3^2\cdot7^{2b}\cdot m)=\frac{12}{7},$$
which is equivalent to
$$\sigma(3^2)\cdot \sigma(7^{2b})\cdot \sigma(m)=12\cdot3^2\cdot 7^{2b-1}\cdot m $$
since $\sigma(3^2)=13$, $13\mid 12\cdot3^2\cdot 7^{2b-1}\cdot m$, that is, $13\mid m$. Therefore, let $m=13m'$, for some positive integer $m'$. Then we have $F=3^2\cdot7^{2b}\cdot 13m'$. Using Lemma \ref{Lemma 2.2}, we get that
$$I(F)\geq I(3^2\cdot 7^2\cdot 13)=\frac{38}{21}>I(14).$$
Hence $a$ cannot be $2$. This completes the proof.
\end{proof}

\begin{lemma}\label{lemma 3.8}
If $F$ is a friend of $14$, then $3$ and $5$ cannot appear simultaneously in the prime factorization of $F$.
\end{lemma}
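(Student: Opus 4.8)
The plan is a short argument by contradiction built on weak multiplicativity and monotonicity of $I$. First I would suppose that both $3$ and $5$ divide $F$. Since $I(F)=\frac{12}{7}$ gives $7\sigma(F)=12F$ and $(7,12)=1$, we have $7\mid F$; hence $3$, $5$, and $7$ are three distinct primes dividing $F$, so $105=3\cdot5\cdot7$ divides $F$.

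Next, write $F=105k$ with $k$ a positive integer. By Lemma~\ref{Lemma 2.2}, $I(F)>I(105)$ whenever $k>1$, while $F=105$ when $k=1$. A direct computation gives $I(105)=\frac{\sigma(3)\sigma(5)\sigma(7)}{105}=\frac{4\cdot6\cdot8}{105}=\frac{64}{35}$, and since $\frac{64}{35}>\frac{60}{35}=\frac{12}{7}$ we conclude that $I(F)>\frac{12}{7}$ if $F\neq105$, while $I(F)=\frac{64}{35}\neq\frac{12}{7}$ if $F=105$. In either case $I(F)\neq\frac{12}{7}$, contradicting the assumption that $F$ is a friend of $14$. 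Hence $3$ and $5$ cannot appear simultaneously in the prime factorization of $F$.

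I do not expect any real obstacle here: the only points needing care are the trivial edge case $F=105$ and the numerical comparison $\frac{64}{35}>\frac{12}{7}$. As an alternative that avoids Lemma~\ref{Lemma 2.2}, one can instead invoke Lemma~\ref{Lemma 2.1} together with the elementary bound $I(p^{a})\geq\frac{p+1}{p}$ for each prime power dividing $F$: retaining only the contributions of $3$, $5$, and $7$ in the product expansion of $I(F)$ yields $I(F)\geq\frac{4}{3}\cdot\frac{6}{5}\cdot\frac{8}{7}=\frac{64}{35}>\frac{12}{7}$, which is the same contradiction.
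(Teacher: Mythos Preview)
Your proof is correct and follows essentially the same route as the paper: assume $3\mid F$ and $5\mid F$, observe that a power of $7$ also divides $F$, and then use the monotonicity of $I$ (Lemma~\ref{Lemma 2.2}) to bound $I(F)$ below by the abundancy of a small explicit divisor, obtaining a value strictly larger than $\tfrac{12}{7}$. The only cosmetic difference is that the paper, having already established $7^{2}\mid F$, compares with $I(3\cdot5\cdot7^{2})=\tfrac{456}{245}$, whereas you compare with $I(3\cdot5\cdot7)=\tfrac{64}{35}$; either choice suffices.
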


\begin{proof}
Let $F$ be a friend of $14$ and if possible, assume that $3,5$ appears simultaneously in the prime factorization of $F$. Then, using Lemma \ref{Lemma 2.2}, we have
$$I(F)\geq I(3\cdot5\cdot 7^2)=\frac{456}{245}>I(14).$$
Therefore, it follows that either $3$ or $5$ can appear in the prime factorization of $F$, but not \linebreak together.
\end{proof}

\smallskip

We now give the lower bounds for $\omega(F)$ according to the prime divisors of $F$.

\smallskip

\begin{theorem}
 If $F$ is a friend of $14$, then $\omega(F)\geq 4$ whenever $3\mid F$ or $5\mid F$. Further, if $(3,F)=(5,F)=1$, then  $\omega(F)\geq 8$.   
\end{theorem}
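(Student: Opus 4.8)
The plan is to bound $I(F)$ from above via Lemma~\ref{Lemma 2.5} (which gives $I(n) < \prod_{p \mid n} \tfrac{p}{p-1}$), combined with the weak multiplicativity of Lemma~\ref{Lemma 2.1} and the structural facts already proved: $7 \mid F$ with $7^{2a} \mid\mid F$; the primes $3$ and $5$ never divide $F$ together (Lemma~\ref{lemma 3.8}); and $3 \mid\mid F$ whenever $3 \mid F$. In each case I would assume $\omega(F)$ is below the claimed threshold, estimate $I(F)$, and reach $I(F) < \tfrac{12}{7}$, contradicting $I(F) = I(14) = \tfrac{12}{7}$.

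First suppose $3 \mid F$, so that $3 \mid\mid F$; then $I(3) = \tfrac{4}{3}$ exactly, $I(7^{2a}) < \tfrac{7}{6}$, and $5 \nmid F$. If $\omega(F) = 2$ then $F = 3 \cdot 7^{2a}$ and $I(F) < \tfrac{4}{3}\cdot\tfrac{7}{6} = \tfrac{14}{9} < \tfrac{12}{7}$. If $\omega(F) = 3$, the third prime avoids $2,3,5,7$ and hence is $\geq 11$, so Lemma~\ref{Lemma 2.5} gives $I(F) < \tfrac{4}{3}\cdot\tfrac{7}{6}\cdot\tfrac{11}{10} = \tfrac{77}{45} < \tfrac{12}{7}$, the last inequality holding because $77 \cdot 7 = 539 < 540 = 45 \cdot 12$. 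Hence $\omega(F) \geq 4$. The case $5 \mid F$ is handled identically and is less delicate: $\omega(F) = 2$ gives $I(F) < \tfrac{5}{4}\cdot\tfrac{7}{6} = \tfrac{35}{24}$, and $\omega(F) = 3$, with the extra prime again $\geq 11$, gives $I(F) < \tfrac{5}{4}\cdot\tfrac{7}{6}\cdot\tfrac{11}{10} = \tfrac{77}{48}$, both comfortably below $\tfrac{12}{7}$.

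Next suppose $(3,F) = (5,F) = 1$. Since $F$ is odd and $7 \mid F$, every prime divisor of $F$ other than $7$ avoids $2,3,5,7$ and so is $\geq 11$. Assume for contradiction that $\omega(F) = k \leq 7$, and write $F = 7^{2a}\prod_{i=2}^{k} p_i^{a_i}$ with $11 \leq p_2 < \dots < p_k$. Then Lemma~\ref{Lemma 2.5} gives
\[
I(F) \;<\; \frac{7}{6}\prod_{i=2}^{k}\frac{p_i}{p_i-1} \;\leq\; \frac{7}{6}\cdot\frac{11}{10}\cdot\frac{13}{12}\cdot\frac{17}{16}\cdot\frac{19}{18}\cdot\frac{23}{22}\cdot\frac{29}{28},
\]
where the second inequality uses that $t \mapsto \tfrac{t}{t-1}$ is decreasing (so replacing each $p_i$ by the $i$-th smallest prime exceeding $7$ only enlarges the product) and that each factor exceeds $1$ (so extending the product up to $k = 7$ only enlarges it). A direct computation shows the right-hand side is $\approx 1.688 < \tfrac{12}{7} \approx 1.714$, a contradiction; hence $\omega(F) \geq 8$.

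The arithmetic is routine throughout, and the main obstacle is simply that two of the estimates are razor-thin: $\tfrac{12}{7} - \tfrac{77}{45} = \tfrac{1}{315}$, and the seven-factor product above sits at about $1.688$ against $\tfrac{12}{7}\approx 1.714$. Consequently one must use the exact value $I(3) = \tfrac{4}{3}$ rather than the weaker $I(3^a) < \tfrac{3}{2}$ in the first case, and retain all six auxiliary primes $11,13,17,19,23,29$ in the product in the last; moreover $8$ is the best bound this argument can give, since the corresponding eight-factor product $\tfrac{7}{6}\cdot\tfrac{11}{10}\cdots\tfrac{31}{30}$ already exceeds $\tfrac{12}{7}$.
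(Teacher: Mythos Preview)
Your proof is correct and follows essentially the same approach as the paper: in each case you bound $I(F)$ above using Lemma~\ref{Lemma 2.5}, replace the non-$7$ prime divisors by the smallest admissible primes via the monotonicity of $t \mapsto \tfrac{t}{t-1}$ (the paper's Lemma~\ref{Lemma 2.4}), and compare against $\tfrac{12}{7}$. The numerical bounds $\tfrac{77}{45}$, $\tfrac{77}{48}$, and the seven-prime product agree with the paper's, and your added remarks on the tightness of the estimates are accurate.
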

\begin{proof}
Let $F$ be a friend of $14$. If $3\mid F$, then $5\nmid F$ by Lemma \ref{lemma 3.8}, therefore all prime divisors of $F$ are greater than $5$. Let us suppose that $F$ has exactly three distinct prime divisors, that is, $F=3\cdot 7^{2a}\cdot p^{b}$, where $p>7$ is a prime and $a,b$ are positive integers. Then, using Lemma \ref{Lemma 2.1}, Lemma \ref{Lemma 2.4} and Lemma \ref{Lemma 2.5}, we get 
$$I(F)\leq I(3\cdot 7^{2a}\cdot 11^{b})=I(3)\cdot I(7^{2a}\cdot 11^{b})<\frac{4}{3}\cdot \frac{7}{6}\cdot\frac{11}{10}=\frac{77}{45}<I(14).$$
Therefore, $F$ cannot have exactly three distinct prime divisors, hence $\omega(F)\geq 4$.

If $5\mid F$, then $3\nmid F$ by Lemma \ref{lemma 3.8}, therefore all prime divisors of $F$ are greater than $3$. Let us suppose that $F$ has exactly three distinct prime divisors, that is, $F=5^{a}\cdot 7^{2b}\cdot p^{c}$, where $p>7$ is a prime and $a,b,c\in\mathbb{Z^+}$. Then, using Lemma \ref{Lemma 2.4} and Lemma \ref{Lemma 2.5}, we get 
$$I(F)\leq I(5^a\cdot 7^{2b}\cdot 11^{c})<\frac{5}{4}\cdot \frac{7}{6}\cdot\frac{11}{10}=\frac{77}{48}<I(14).$$
Therefore, $F$ cannot have exactly three distinct prime divisors, hence $\omega(F)\geq 4$.

Let  $(3,F)=(5,F)=1$. Then every prime divisors of $F$ are strictly greater than $5$. Let us suppose that $F$ has at most seven distinct prime divisors, that is, $F=\displaystyle{7^{2a}\cdot\prod_{i=1}^{k}p_i^{a_i}}$, where $p_{i+1}>p_i>7$ and $k\leq 6$. Then, by Lemma\ref{Lemma 2.2}, Lemma \ref{Lemma 2.4}, and Lemma \ref{Lemma 2.5}, we get
\begin{align*}
I(F)&\leq I(7^{2a}\cdot \prod_{i=5}^{k+4}q_i^{a_{i-4}})~~~~~~~(q_i \text{ is the $i$-th prime number})\\
&\leq  I(7^{2a}\cdot 11^{a_1}\cdot 13^{a_2}\cdot 17^{a_3}\cdot 19^{a_4}\cdot 23^{a_5}\cdot 29^{a_6})\\[2mm]
&<\frac{7\cdot11\cdot13\cdot17\cdot19\cdot23\cdot29}{6\cdot10\cdot12\cdot16\cdot18\cdot22\cdot28}\\[2mm]
& =\frac{2800733}{1658880} \\
& <I(14).    
\end{align*}

Therefore, $F$ cannot have at most seven distinct prime divisors, hence $\omega(F)\geq 8$. This completes the proof.
\end{proof}

\begin{theorem}
No prime divisor of a friend $F$ of $14$ can exceed $1.4\sqrt{F}$. \end{theorem}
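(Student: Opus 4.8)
The plan is to argue by contradiction. Suppose a prime $p$ divides $F$ with $p > 1.4\sqrt{F}$. The first step is to pin down the exponent of $p$: since $p^{2} > 1.96\,F > F$, the prime $p$ cannot divide $F$ to a power $\geq 2$, so $p \mid\mid F$. In particular $p \neq 7$, because we have already shown $7^{2a}\mid\mid F$ for some positive integer $a$. Write $F = p\,m$ with $(p,m)=1$ and $m = F/p$; note that $49 \mid m$, so $m \geq 49 > 1$.

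Next I would invoke weak multiplicativity (Lemma~\ref{Lemma 2.1}): from $I(F)=I(p)\,I(m)=\frac{p+1}{p}\cdot\frac{\sigma(m)}{m}=\frac{12}{7}$ one obtains the relation $7(p+1)\,\sigma(m)=12\,pm=12F$. The crucial move is a divisibility argument: since $p\neq 7$ and $(p,p+1)=1$, we have $(p,\,7(p+1))=1$, and therefore $p \mid \sigma(m)$. Because $\sigma(m)$ is a positive integer, this yields the numerical lower bound $\sigma(m)\geq p$. Substituting $\sigma(m)=\dfrac{12\,pm}{7(p+1)}$ into $\sigma(m)\geq p$ and cancelling the positive factor $p$ gives $12\,m\geq 7(p+1)$, hence $p<\tfrac{12}{7}\,m$.

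Finally I would bring in the size hypothesis directly. From $p>1.4\sqrt{F}=1.4\sqrt{pm}$ we get $p^{2}>1.96\,pm$, i.e.\ $p>1.96\,m$. Combining the two inequalities gives $1.96\,m < p < \tfrac{12}{7}\,m$, which is absurd since $1.96 > \tfrac{12}{7}=1.7142\ldots$. This contradiction shows that no prime divisor of $F$ can exceed $1.4\sqrt{F}$. (The same argument works verbatim with any constant $c$ satisfying $c^{2}\geq \tfrac{12}{7}$, for instance $c=\sqrt{12/7}=1.309\ldots$, so the stated bound is not sharp; but $1.4$ suffices and keeps the statement clean.)

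I do not anticipate a genuine obstacle. The only points requiring care are verifying $p\neq 7$ (so that the factor $7$ in $7(p+1)$ does not absorb the condition $p\mid\sigma(m)$) and noting $\sigma(m)\neq 0$, so that $p\mid\sigma(m)$ really forces $\sigma(m)\geq p$; both are immediate here. The one slightly non-routine idea is converting the divisibility statement $p\mid\sigma(m)$ into the inequality $\sigma(m)\geq p$, which is precisely what couples the multiplicative identity $7(p+1)\sigma(m)=12F$ to the size hypothesis.
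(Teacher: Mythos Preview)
Your proof is correct. The paper argues the same inequality $p^{2}\le \tfrac{12F}{7}$ but organizes it as a direct proof that handles every exponent at once: writing $F=p^{a}\cdot 7^{2b}\cdot m$ with $(7p,m)=1$, it notes that both $p^{a}$ and $\sigma(p^{a})$ divide $\sigma(F)=\tfrac{12F}{7}$, and since $(p^{a},\sigma(p^{a}))=1$ their product does too, whence $p^{2}\le p^{a}\sigma(p^{a})\le \tfrac{12F}{7}$ and $p<1.4\sqrt{F}$. Your version reaches the same endpoint via contradiction after first reducing to $p\mid\mid F$; the underlying divisibility idea (that $p$ must divide the cofactor $\sigma$-value) is the same, and you additionally observe---which the paper does not---that the constant can be sharpened to $\sqrt{12/7}\approx 1.309$.
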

\begin{proof}
Let $p$ be a prime divisor of $F$. Then we can write $F=p^{a}\cdot7^{2b}\cdot m$ where $a,b,m$ are positive integers such that $(7p,m)=1$. Since
$$I(F)=\frac{\sigma(F)}{F}=\frac{\sigma(p^a)\cdot\sigma(7^{2b})\cdot\sigma(m)}{p^a\cdot7^{2b}\cdot m}=\frac{12}{7},$$
we have
$$\sigma(F)=\sigma(p^a)\cdot\sigma(7^{2b})\cdot\sigma(m)=12\cdot p^a\cdot7^{2b-1}\cdot m=\frac{12F}{7}.$$
Note that, $p^a\mid \sigma(F)$ and $\sigma(p^a)\mid \sigma(F)$, since $(p^a,\sigma(p^a))=1$ we have $p^a\cdot\sigma(p^a)\mid \sigma(F)$. Therefore,
$$p^2\leq p^a\cdot\sigma(p^a)\leq \sigma(F)=\frac{12F}{7},$$
that is
$$p\leq \sqrt{\frac{12F}{7}}<1.4\sqrt{F}.$$
For the prime divisor $7$ of $F$, the proof proceeds in the same way.
\end{proof}

\makeatletter
\renewcommand{\@biblabel}[1]{[#1]\hfill}
\makeatother

\end{document}